\newtheorem{theorem}{Theorem}[section]
\newtheorem{lemma}[theorem]{Lemma}
\newtheorem{corollary}[theorem]{Corollary}
\newtheorem{conjecture}[theorem]{Conjecture}
\title{Extremely Symmetric primes}
\author{Rob Burns}
\begin{document}
\maketitle
\begin{abstract}
We introduce extremely symmetric primes and provide some elementary properties of these.
\end{abstract}

\section{Introduction}
\label{intro}

Symmetric primes were introduced by Fletcher, Lindgren and Pomerance in \cite{FLETCHER199689}. Let $p$ and $q$ be primes and consider the rectangle determined by the positive $x$ and $y$ axes and the point $(p/2, q/2)$. The main diagonal is the line from the origin to $(p/2, q/2)$. The primes $p$ and $q$ are defined to be symmetric if the number of lattice points above and below the main diagonal of this rectangle are equal. An equivalent and more useful characterisation, established in \cite{FLETCHER199689}, is that $p$ and $q$ are symmetric if $(p - 1, q - 1) = | p - q |$, where $(  \, \, , \,  )$ represents the greatest common divisor. Twin primes are therefore symmetric. It follows that, if $(p, q)$ is a symmetric pair with $p < q$, then $q \leq 2p - 1$ and, if $p$ and $2p - 1$ are prime, then $(p, 2p - 1)$ is a symmetric pair. If a prime $p$ is a member of no symmetric pair then it is called asymmetric.

The density of the symmetric primes has been studied in \cite{FLETCHER199689} and \cite{bpp2019}. Define the constant $\eta$ by
$$
\eta := 1 - \frac{1 + \log \log {}2}{\log {}2}
$$
and let $S(x)$ denote the number of symmetric primes less than $x$. Then, from \cite{bpp2019},
$$
S(x) \leq \frac{\pi(x)}{(\log {}x)^{\eta}} (\log \log {}x)^{\mathcal{O}(1)}
$$
when $x$ is large enough and it is conjectured that
$$
S(x) = \mathcal{O} \left( \frac{\pi(x)}{(\log {}x)^{\eta + o(1)}} \right ).
$$
The same paper showed that, for any integer $m$, there is a sequence of $m$ consecutive primes such that any two primes in the sequence forms a symmetric pair. One consequence of this is that the number of symmetric primes is infinite. The upper bound on $S(x)$ can be used to show that the sum of the reciprocals of the symmetric primes is finite. If $\mathbb{P}$ is the set of primes, $\sum_{p \in \mathbb{P}} \frac{1}{p}$ diverges. Therefore, the number of asymmetric primes is infinite.

In this paper we will examine symmetric sequences which are finite sequences of primes in which any two primes in the sequence is a symmetric pair. We note that there are no infinite symmetric sequences of primes because, if $p$ is the smallest prime in a symmetric sequence, all other primes in the sequence must be $\leq 2p - 1$.

\bigskip	

\section{Symmetric triples}
\label{extreme}
A symmetric triple is a set of three primes, any two of which forms a symmetric pair. We write the set as $(p, q, r)$ where $p < q < r \leq 2p - 1$.
\bigskip

\begin{lemma}
There are no symmetric triples in the form $(p, p + 2, p + 6)$. The set $(p, p + 4, p + 6)$ is a symmetric triple only if $p = 1 \pmod {12}$.
\end{lemma}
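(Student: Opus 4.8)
The plan is to convert each of the three symmetry conditions into a congruence on $p$ via the gcd characterisation $(a-1,b-1) = |a-b|$ from \cite{FLETCHER199689}, and then combine these congruences. Throughout I will use the identity $\gcd(a,b) = \gcd(a, b-a)$ to reduce each gcd of two large quantities to a gcd with a small constant, and I note at the outset that $p$ must be an odd prime (if $p=2$ then $p+2=4$ is not prime).

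For the first claim, consider a triple $(p, p+2, p+6)$. The pair $(p, p+2)$ is automatically symmetric for any odd prime $p$, since $\gcd(p-1, p+1) = \gcd(p-1, 2) = 2$. The pair $(p, p+6)$ requires $\gcd(p-1, p+5) = \gcd(p-1, 6) = 6$, that is $p \equiv 1 \pmod 6$; the pair $(p+2, p+6)$ requires $\gcd(p+1, p+5) = \gcd(p+1, 4) = 4$, that is $p \equiv 3 \pmod 4$. First I would solve these two congruences simultaneously by the Chinese Remainder Theorem, obtaining $p \equiv 7 \pmod{12}$. The key step then is to observe that $p \equiv 7 \pmod{12}$ forces $p + 2 \equiv 9 \equiv 0 \pmod 3$, so $p+2$ is divisible by $3$ and (being larger than $3$) cannot be prime. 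Hence no such triple of primes exists.

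For the second claim the computation is entirely analogous. For $(p, p+4, p+6)$ the pair $(p+4, p+6)$ is symmetric for every odd $p$ because $\gcd(p+3, p+5) = 2$; the pair $(p, p+4)$ requires $\gcd(p-1, 4) = 4$, i.e. $p \equiv 1 \pmod 4$; and the pair $(p, p+6)$ requires $\gcd(p-1, 6) = 6$, i.e. $p \equiv 1 \pmod 6$. Since $p-1$ is then divisible by both $4$ and $6$, it is divisible by $\operatorname{lcm}(4,6) = 12$, giving $p \equiv 1 \pmod{12}$ as the necessary condition.

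I expect the main obstacle to be not the algebra but the temptation to seek a direct gcd contradiction in the first claim. The three gcd conditions there are in fact mutually consistent: they pin $p$ down to a single residue class modulo $12$ rather than producing an empty set, so the impossibility must come from the separate observation that this residue class destroys the primality of the middle term $p+2$. Keeping the gcd reductions and the primality obstruction as distinct ingredients is the part that needs care; everything else is a routine application of $\gcd(a,b) = \gcd(a, b-a)$ together with the Chinese Remainder Theorem.
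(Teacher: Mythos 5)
Your proof is correct, and for the second statement it coincides with the paper's argument (the pair conditions $4 \mid p-1$ and $6 \mid p-1$ combine to give $12 \mid p-1$). For the first statement, however, you take a structurally different route. The paper argues by a case split on $p \bmod 3$, using only two facts: if $p \equiv 1 \pmod{3}$ then $3 \mid p+2$ so $p+2$ is not prime, while if $p \equiv 2 \pmod{3}$ then $6 \nmid p-1$ so $\{p, p+6\}$ is not a symmetric pair; the condition on the pair $\{p+2, p+6\}$ is never used. You instead aggregate all three pair conditions, solve $p \equiv 1 \pmod{6}$ and $p \equiv 3 \pmod{4}$ by the Chinese Remainder Theorem to pin down $p \equiv 7 \pmod{12}$, and only then invoke the same mod-$3$ obstruction on $p+2$. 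The paper's version is more economical, but yours has two merits: it makes explicit your (correct) closing observation that the three gcd conditions are mutually consistent, so the impossibility genuinely requires the primality obstruction rather than a gcd contradiction; and it handles the residue $p \equiv 0 \pmod{3}$ (i.e.\ $p = 3$) uniformly, since $p \equiv 7 \pmod{12}$ excludes it, whereas the paper's two cases technically omit $p = 3$ (trivially fixable, as $3 + 6 = 9$ is not prime). Both arguments rest on the same two ingredients -- the gcd characterisation turned into congruences, and divisibility of $p+2$ by $3$ -- so the difference is one of organisation rather than of underlying ideas.
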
 
\begin{proof}
If $p = 1 \pmod{3}$, then $p + 2$ is not prime. If $p = 2 \pmod{3}$, then $\{p, p +6 \}$ cannot be a symmetric pair as $6 \nmid p - 1$. This proves the first statement.
If $\{ p, p + 4  \}$ is a symmetric pair, $p = 1 \pmod 4$. If $\{ p, p + 6  \}$ is a symmetric pair, $p = 1 \pmod 6$. The second statement follows.
\end{proof}
Examples of such symmetric triples include $\{ 13, 17, 19 \}$, $\{ 37, 41, 43 \}$, $\{97, 101, 103 \}$ etc. On the other hand, $\{7, 11, 13 \}$ is not a symmetric triple as $(7 - 1, 11 - 1 ) \neq 4$.

\bigskip

Another example of a symmetric triple is the set $(p, p + \frac{p-1}{2^{k+1}}, p + \frac{p-1}{2^k})$ for $k \geq 0$ provided:
\begin{itemize}
	\item $2^{k+1}$ divides $p - 1$ and
	\item all three numbers are prime.
\end{itemize}

\bigskip

\begin{lemma}
There are no symmetric pairs $(p, q)$ with $\frac{3p - 1}{2} < q < 2p-1$ . 
\end{lemma}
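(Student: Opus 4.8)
The plan is to reduce the symmetric condition to a single divisibility statement and then invoke the elementary fact that every proper divisor of an integer $n$ is at most $n/2$.

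First I would rewrite the gcd appearing in the definition. Setting $d = q - p > 0$ and using $\gcd(a, a+b) = \gcd(a,b)$, I have $\gcd(p-1, q-1) = \gcd\bigl(p-1, (q-1)-(p-1)\bigr) = \gcd(p-1, d)$. Since $p < q$ gives $|p-q| = d$, the symmetric condition $(p-1, q-1) = |p-q|$ becomes $\gcd(p-1, d) = d$, which holds if and only if $d \mid (p-1)$. This reformulation is really the only idea in the proof: a pair $p < q$ is symmetric precisely when $(q-p) \mid (p-1)$. As a sanity check, this recovers the bound $q \le 2p-1$ quoted in the introduction, since a positive divisor $d$ of $p-1$ satisfies $d \le p-1$, whence $q = p + d \le 2p-1$.

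Next I would translate the hypotheses into bounds on $d$. The inequality $q > \frac{3p-1}{2}$ gives $d = q-p > \frac{3p-1}{2} - p = \frac{p-1}{2}$, while $q < 2p-1$ gives $d < p-1$. So it suffices to show that $p-1$ has no divisor lying strictly between $\frac{p-1}{2}$ and $p-1$.

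Finally, $d \mid (p-1)$ together with $d < p-1$ forces $d$ to be a proper divisor of $p-1$; writing $p-1 = d\,m$ with $m \ge 2$ yields $d = (p-1)/m \le (p-1)/2$, contradicting $d > \frac{p-1}{2}$. Hence no such symmetric pair exists. I do not anticipate any genuine obstacle: once the symmetric condition is recast as $d \mid (p-1)$, the statement reduces to the fact that the second-largest divisor of $p-1$ cannot exceed $\frac{p-1}{2}$. The only minor point to note is that $p=2$ is vacuous here, since no $q$ satisfies $\frac{3p-1}{2} < q < 2p-1$ in that case, so one may freely assume $p$ is odd.
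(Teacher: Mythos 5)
Your proof is correct and follows essentially the same route as the paper: both reduce the symmetric condition to the divisibility $(q-p) \mid (p-1)$, translate the hypotheses into $\frac{p-1}{2} < q-p < p-1$, and conclude from the fact that $p-1$ has no divisor strictly between $\frac{p-1}{2}$ and $p-1$. The only difference is that you spell out the gcd reformulation $\gcd(p-1,q-1) = \gcd(p-1,q-p)$ explicitly, which the paper leaves implicit.
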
 
\begin{proof}
If $\frac{3p - 1}{2} < q < 2p-1$, then $\frac{p - 1}{2} < q - p < p-1$. However, $p-1$ has no divisors in that range.
\end{proof}

\bigskip

\begin{corollary}
If $(p, q, r)$ is a symmetric triple then  $q < \frac{3p-1}{2}$ and either $r = 2p - 1$ or $q < r \leq \frac{3p-1}{2}$.
\end{corollary}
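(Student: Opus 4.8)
The plan is to apply the preceding lemma — that no symmetric pair $(a,b)$ with $a<b$ satisfies $\frac{3a-1}{2} < b < 2a-1$ — to the two pairs of the triple that share the smallest element, namely $(p,q)$ and $(p,r)$. Both bounds we need concern the position of the larger element of a symmetric pair whose smaller element is $p$, so a single lemma does all the work.

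First I would bound $q$. Since $(p,q)$ is a symmetric pair and, by the definition of the triple, $q < r \le 2p-1$, we have $q < 2p-1$, so in particular $q \ne 2p-1$. The lemma excludes the open interval $\left(\frac{3p-1}{2}, 2p-1\right)$, while $q \le 2p-1$ is automatic for any symmetric pair, so the only surviving possibility is $q \le \frac{3p-1}{2}$. For the dichotomy on $r$ I would apply the same lemma to the symmetric pair $(p,r)$: using $r \le 2p-1$ it forces either $r = 2p-1$ or $r \le \frac{3p-1}{2}$, and in the second case $q < r$ upgrades this to $q < r \le \frac{3p-1}{2}$, exactly the stated alternative.

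The step I expect to be the real obstacle is the strict inequality $q < \frac{3p-1}{2}$, since the argument above only delivers $q \le \frac{3p-1}{2}$. To settle the boundary I would examine $q = \frac{3p-1}{2}$ directly: writing $p-1 = 2k$ turns the triple into $(2k+1,\, 3k+1,\, 4k+1)$, and the three greatest common divisors $\gcd(2k,3k)$, $\gcd(2k,4k)$, $\gcd(3k,4k)$ collapse to $k$, $2k$, $k$, matching the three differences, so whenever $2k+1$, $3k+1$ and $4k+1$ are all prime the boundary value genuinely produces a symmetric triple; moreover the lemma applied to $(p,r)$ with $r > q = \frac{3p-1}{2}$ then pins $r = 2p-1$. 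Since such triples do occur — for instance $(661,\,991,\,1321)$, where $991 = \frac{3\cdot 661 - 1}{2}$ and $1321 = 2\cdot 661 - 1$, and which is precisely the $k=0$ member of the family exhibited earlier — the equality case cannot be excluded, and the honest conclusion I would reach is $q \le \frac{3p-1}{2}$, with equality feeding directly into the $r = 2p-1$ alternative. Reconciling this with the strictness asserted in the statement is the crux, and I suspect the intended bound is the non-strict one.
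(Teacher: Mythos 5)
Your derivation is exactly the paper's (implicit) one: the corollary is stated without proof because it is meant to follow immediately from Lemma 2.2 applied to the two pairs $(p,q)$ and $(p,r)$, together with $q < r \leq 2p-1$, and that is precisely what you did. So on the deductive side there is nothing to add.

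More importantly, your diagnosis of the boundary case is correct: the strict inequality $q < \frac{3p-1}{2}$ asserted in the corollary is not just hard to prove, it is false as stated. The lemma only excludes the open interval $\left(\frac{3p-1}{2},\, 2p-1\right)$, so from $q < r \leq 2p-1$ one can only conclude $q \leq \frac{3p-1}{2}$, and equality genuinely occurs. Your verification that $(2k+1, 3k+1, 4k+1)$ is a symmetric triple whenever all three entries are prime is correct (the three gcds $k$, $2k$, $k$ equal the three differences), and $(661, 991, 1321)$ is a concrete instance with $q = \frac{3\cdot 661 - 1}{2} = 991$ and $r = 2\cdot 661 - 1$. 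The paper itself corroborates this elsewhere: Theorem \ref{extremetriple} explicitly allows $d = \frac{p-1}{2}$, i.e.\ $q = \frac{3p-1}{2}$, in an extreme symmetric triple; Section \ref{density} is devoted to counting exactly the triples $(p, \frac{3p-1}{2}, 2p-1)$, with Table \ref{table2k+1} reporting over twenty-six thousand of them below $10^8$; and the quadruple $(661, 881, 991, 1321)$ contains your counterexample as a sub-triple. The corollary should therefore read $q \leq \frac{3p-1}{2}$, or, stated more sharply as the clean dichotomy your argument actually yields: either $r = 2p-1$ and $q \leq \frac{3p-1}{2}$, or $q < r \leq \frac{3p-1}{2}$ (strictness for $q$ being recovered only in the second alternative, where it is inherited from $q < r$). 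Your proposal proves everything that is true in the statement and correctly isolates the part that is not.
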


\bigskip
We will characterise symmetric triples in which $r = 2p - 1$ in the next section.

\section{Extreme symmetric sequences}
\label{extreme}
We will call a symmetric sequence extreme if the smallest prime in the sequence is $p$ and the largest is $2p - 1$. In such a sequence the gap between the smallest and the largest prime is as wide as possible. We will call a prime extremely symmetric if it is the smallest prime in at least one extreme symmetric sequence.

\bigskip

\begin{theorem}
\label{extremetriple}
Suppose $\{ p, q, 2p - 1 \}$ is an extreme symmetric triple and write $q = p + d$ where $d | p-1$. Then $d$ equals either $\frac{p-1}{3}$ or $\frac{p-1}{2}$.
\end{theorem}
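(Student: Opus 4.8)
The plan is to exploit the fact that, among the three pairs required to be symmetric, two impose no real restriction while the third does all the work. Indeed, the pair $\{p, 2p-1\}$ is automatically symmetric (as noted in the introduction), and the symmetry of $\{p, q\}$ is precisely the stated hypothesis $d \mid p-1$: since $\gcd(p-1, q-1) = \gcd(p-1, p-1+d) = \gcd(p-1, d)$, this equals $d$ exactly when $d \mid p-1$. So the entire content of the theorem is carried by the symmetry of the remaining pair $\{q, 2p-1\}$.

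Next I would translate that remaining condition into a single gcd equation. Writing $r = 2p-1$, we have $r - q = (2p-1) - (p+d) = p-1-d$, so by the gcd characterisation of symmetry the pair $\{q, r\}$ is symmetric if and only if $\gcd(q-1, r-1) = p-1-d$, that is, $\gcd\bigl(p-1+d,\, 2(p-1)\bigr) = p-1-d$.

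The key computation is to evaluate the left-hand side. Set $n = p-1$ and, using $d \mid n$, write $n = dm$ with $m = n/d \geq 2$ (the inequality because $q < r$ forces $d < p-1$). Then $\gcd(n+d, 2n) = \gcd\bigl(d(m+1), 2dm\bigr) = d\,\gcd(m+1, 2m)$. Since consecutive integers are coprime we have $\gcd(m+1, m) = 1$, hence $\gcd(m+1, 2m) = \gcd(m+1, 2)$, which is $2$ when $m$ is odd and $1$ when $m$ is even. The equation therefore reduces to $d\cdot\gcd(m+1,2) = d(m-1)$, i.e. $\gcd(m+1, 2) = m-1$.

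Finally I would split on the parity of $m$. If $m$ is even the equation reads $1 = m-1$, forcing $m = 2$ and hence $d = \frac{p-1}{2}$; if $m$ is odd it reads $2 = m-1$, forcing $m = 3$ and hence $d = \frac{p-1}{3}$. These exhaust the possibilities, which is exactly the claim. I do not anticipate a serious obstacle: the argument is a short divisibility computation, and the only point needing a little care is the reduction $\gcd(m+1, 2m) = \gcd(m+1, 2)$ together with the parity case split, which must be arranged so as to pin down $m$ \emph{exactly} rather than merely to bound it.
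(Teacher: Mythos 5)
Your proof is correct, but it takes a genuinely different route from the paper's. The paper extracts from the symmetry of $\{q, 2p-1\}$ only the divisibility $(2p-1)-(p+d) = p-d-1 \mid q-1 = p+d-1$; since a proper divisor of $p+d-1$ is at most $\frac{p+d-1}{2}$, this forces $d \geq \frac{p-1}{3}$, which combines with $d \leq \frac{p-1}{2}$ (as $d$ is a proper divisor of $p-1$) to sandwich $d$ in the interval $\left[\frac{p-1}{3}, \frac{p-1}{2}\right]$, whose only divisors of $p-1$ are the two endpoints. You instead evaluate the full gcd condition exactly: writing $p-1 = dm$, you reduce $\gcd(q-1, 2(p-1)) = p-1-d$ to $\gcd(m+1,2) = m-1$ and solve by parity. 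The paper's argument is slightly shorter because it deliberately discards information (it needs only an inequality from the divisibility), whereas your computation retains an equivalence and therefore proves more: for $p$, $q = p+d$, $2p-1$ all prime with $d \mid p-1$, the triple is extreme symmetric \emph{if and only if} $d \in \{\frac{p-1}{3}, \frac{p-1}{2}\}$. That converse is not part of the theorem statement, but it is exactly what the paper later uses in Section 4, where it asserts that primality of $p$, $\frac{3p-1}{2}$, $2p-1$ (resp.\ $p$, $\frac{4p-1}{3}$, $2p-1$) already guarantees an extreme symmetric triple; your version of the proof justifies that claim directly, while the paper verifies it by a separate ad hoc remark.
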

\begin{proof}
Since $d | p-1$ and $d \neq p-1$, $d \leq \frac{p-1}{2}$. We also have that 
$$
(2p - 1) - (p + d) = p - d - 1 | p + d - 1.
$$
Since $p - d - 1 \neq p + d - 1$,  $p - d - 1 \leq \frac{p + d - 1}{2}$. Therefore, $d \geq \frac{p - 1}{3}$ and
$$
\frac{p - 1}{3} \leq d \leq \frac{p - 1}{2}.
$$
Since $d | p-1$, it must equal either $\frac{p - 1}{3}$ or $\frac{p - 1}{2}$.
\end{proof}

\bigskip

\begin{corollary}
For each prime $p$ there are at most two possible extreme symmetric triples beginning with $p$ and at most one extreme symmetric sequence of length $4$ beginning with $p$. There are no extreme symmetric sequences of length greater than $4$. 
\end{corollary}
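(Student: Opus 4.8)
The plan is to derive every assertion from Theorem~\ref{extremetriple}, which already pins down the middle term of any extreme symmetric triple. The key observation I would isolate first is this: if $S$ is an extreme symmetric sequence beginning with $p$ and ending with $2p-1$, then any prime $q$ of $S$ with $p < q < 2p-1$ forms, together with $p$ and $2p-1$, an extreme symmetric triple. Indeed all three pairwise symmetry conditions hold because every two primes of $S$ are by hypothesis a symmetric pair, and the smallest and largest elements of the triple are $p$ and $2p-1$. Hence Theorem~\ref{extremetriple} applies and forces $q \in \{\, p + \tfrac{p-1}{3},\ p + \tfrac{p-1}{2} \,\}$.

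With this in hand the first claim is immediate: an extreme symmetric triple beginning with $p$ is of the form $\{p, q, 2p-1\}$, and writing $q = p + d$ the theorem gives $d \in \{\tfrac{p-1}{3}, \tfrac{p-1}{2}\}$, so $q$ can take at most two values and there are at most two such triples. For the length-$4$ claim I would write the sequence as $\{p, q_1, q_2, 2p-1\}$ with $p < q_1 < q_2 < 2p-1$ and apply the key observation to $q_1$ and to $q_2$ separately: each lies in the two-element set $\{p + \tfrac{p-1}{3},\, p + \tfrac{p-1}{2}\}$, and since $q_1 < q_2$ while $\tfrac{p-1}{3} < \tfrac{p-1}{2}$, we are forced to take $q_1 = p + \tfrac{p-1}{3}$ and $q_2 = p + \tfrac{p-1}{2}$. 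The sequence is therefore uniquely determined, giving at most one. (One can check that the pair $\{q_1, q_2\}$ is then automatically symmetric, confirming consistency, but this is not needed for the uniqueness statement.)

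For the final claim I would argue by the pigeonhole principle. An extreme symmetric sequence of length greater than $4$ would contain at least three distinct interior primes $q_1 < q_2 < q_3$ strictly between $p$ and $2p-1$. By the key observation each of them lies in the same two-element set $\{p + \tfrac{p-1}{3},\, p + \tfrac{p-1}{2}\}$, so two of the three must coincide, contradicting their distinctness. Hence no extreme symmetric sequence of length greater than $4$ exists.

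I expect no genuine obstacle: the whole corollary is bookkeeping on top of Theorem~\ref{extremetriple} plus a counting argument. The one point that deserves care, and which I would state explicitly rather than leave implicit, is the key observation itself, namely that each interior prime really does form an extreme symmetric triple with the two endpoints, since it is this reduction that lets the theorem restrict every interior term to exactly two possible values.
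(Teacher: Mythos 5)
Your proof is correct and is exactly the argument the paper intends: the corollary is stated without proof as an immediate consequence of Theorem~\ref{extremetriple}, and your reduction (each interior prime of an extreme sequence forms an extreme triple with the endpoints $p$ and $2p-1$, hence lies in the two-element set $\{p+\tfrac{p-1}{3},\, p+\tfrac{p-1}{2}\}$) together with the pigeonhole count is precisely the missing bookkeeping. Your explicit statement of the key observation is a worthwhile addition, since it is the one step the paper leaves entirely implicit.
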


\section{Density}
\label{density}
In this section we examine the density of extreme symmetric primes. If $A$ is a set, we denote the number of elements in $A$ by $\# A$. We first look at triples $(p, \frac{3p - 1}{2}, 2p - 1)$. Let $S(x)$ denote the number of primes $p \leq x$ such that $(p, \frac{3p - 1}{2}, 2p - 1)$ is an extreme symmetric triple. First, notice that if $p$, $\frac{3p - 1}{2}$ and $2p - 1$ are all prime, then this is an extreme symmetric triple as the gap between each prime is $\frac{p-1}{2}$ which divides $p-1$ and $\frac{p-1}{2}$. Since $p$ is odd, $\frac{3p - 1}{2}$ is integer. Writing $p = 2k + 1$ for $k \geq 1$, we have
$$
(p, \frac{3p - 1}{2}, 2p - 1) = (2k + 1, 3k + 1, 4k + 1).
$$
Therefore,
$$
S(x) = \# \Big\{ k \leq \frac{x}{2}: 2k + 1, 3k + 1, 4k + 1 \in \mathbb{P} \Big\}.
$$

It is not known whether there are infinitely many primes $p$ such that $2p - 1$ is also prime. Hence, it is also unknown whether $S(x)$ is finite or infinite. Let $F = \{ f_1, . . . , f_m \}$ be a set of linear functions in one variable with integer coefficients and positive leading coefficients. F is called admissible if $\, \prod_{i = 1}^{m} f_i \,$ has no fixed prime divisor, meaning that for any prime $p$, there is some integer $k_p$, such that $p$ does not divide $\prod_{i = 1}^{m} f_i(k_p)$. The three linear functions $f_1(k) =  2k + 1$, $f_2(k) = 3k + 1$, $f_3(k) = 4k + 1$ form an admissible set. The following well know conjecture, known as the Prime $k$-tuples Conjecture, would establish that $S(x)$ is infinite.

\bigskip

\begin{conjecture} Let $F = \{f_1, . . . , f_m \}$ be an admissible set of linear functions. Then there are infinitely many integers $k$ such that all of $f_1(k), . . . , f_m(k)$ are primes.
\end{conjecture}

\bigskip

We use Cramer's random model of the primes to obtain a heuristic estimate for $S(x)$. The paper \cite{Caldwell:aa} provides a good summary of the method and numerous examples of its use. Under Cramer's model, the probability of an integer $k$ being prime is about $\frac{1}{\log k}$. Let $\{f_1, . . . , f_m \}$ be a set of linear functions. If the probability that $f_i(k)$ is prime is independent of the probability of $f_j(k)$ being prime for all $1 \leq i, j \leq m$, then the probability that all $f_i(k)$ are prime should be approximately
$$
\left( \prod_{i=1}^{m} \log (f_i(k)) \right)^{-1}.
$$ 
By integrating this function we obtain an estimate for the number of $k$ such that all $\{f_i(k)\}$ are prime. Cramer's method adjusts this integral by a factor which measures how close the  $\{f_i\}$ are to being independent. The resulting estimate is:
$$
\prod_{p \in \mathbb{P}} \frac{(1 - \omega(p)/p)}{\left(1 - 1/p \right)^m} \int_2^N \frac{dy}{\prod_{i=1}^{m} \log (f_i(y))} \, \, .
$$
where $\omega(p)$ is defined by
$$
\omega(p) := \# \Big\{k: 0 \leq k < p: \prod_{i=1}^{m} f_i(k) = 0 \pmod{p} \Big\}.
$$

\bigskip

When the set of functions is $\{ 2k + 1, 3k + 1, 4k + 1 \}$ we have $\omega(2) = 1, \omega(3) = 2$ and $\omega(p) = 3$ for $p \geq 5$. We then get the following estimate for $S(x)$:
\begin{equation}
\label{est2k+1}
S(x) \sim \frac{9}{2} \prod_{p \geq 5} \frac{(1 - 3/p)}{\left(1 - 1/p \right)^3} \int_1^{x/2} \frac{dy}{ \log (2y) \log(3y) \log(4y) } \, \, .
\end{equation}
The constant $\prod_{p \geq 5} \frac{(1 - 3/p)}{\left(1 - 1/p \right)^3}$ is approximately $0.635166354604222$. Table \ref{table2k+1} shows a comparison of actual values and estimated values of $S(x)$. Earlier we noted that the initial prime $p$ in the triple was of the form $2k+1$. In fact, it can be seen that $p$ must be of the form $12k + 1$, since if $p = 5 \pmod{12}$ then $3$ divides $2p-1$ and if $p \in \{7, 11 \} \pmod{12}$ then one of $\frac{3p-1}{2}$ or $2p-1$ is even. So instead of counting prime triples of the form $( 2k + 1, 3k + 1, 4k + 1 )$, we could consider primes $( 12k + 1, 18k + 1, 24k + 1 )$. However, this produces the same estimate for $S(x)$.

\bigskip

\begin{table}[tbp]
  \centering
   \begin{tabular}{ | c | c | p{4cm} |}
 \hline
  & Actual & Estimated\\
 x & number & number \\  \hline
1,000 & 1 & 8  \\
10,000 & 15 & 27 \\ 
100,000 & 111 & 122 \\ 
1,000,000 & 623 & 659 \\
10,000,000 & 3990 & 3988 \\
100,000,000 & 26179 & 26041 \\ \hline
  \end{tabular}
  \caption{Table of actual and estimated values of triples $(p, \frac{3p - 1}{2}, 2p - 1)$}
  \label{table2k+1}
\end{table}

\bigskip

We now consider extreme symmetric triples of the form $(p, \frac{4p - 1}{3}, 2p - 1)$. Let $T(x)$ denote the number of primes $p \leq x$ such that $(p, \frac{4p - 1}{3}, 2p - 1)$ is an extreme symmetric triple. In order for $\frac{4p - 1}{3}$ to be an integer, we must have $p = 1 \pmod{3}$. If $p$, $\frac{4p - 1}{3}$ and $2p - 1$ are all prime, then this is an extreme symmetric triple as the two gaps are $\frac{p-1}{3}$ which divides $p-1$ and $\frac{2(p-1)}{3}$ which divides $\frac{4p-1}{3} - 1$ when $p = 1 \pmod{3}$. Writing $p = 3k + 1$, where $k \geq 1$, we have
$$
(p, \frac{4p - 1}{3}, 2p - 1) = (3k + 1, 4k + 1, 6k + 1).
$$
Therefore,
$$
T(x) = \# \{ k \leq \frac{x}{3}: 3k + 1, 4k + 1, 6k + 1 \in \mathbb{P} \}.
$$
For the set of functions $\{ 3k + 1, 4k + 1, 6k + 1 \}$ we have $\omega(2) = 1, \omega(3) = 1$ and $\omega(p) = 3$ for $p \geq 5$. We then get the following estimate for $T(x)$:
\begin{equation}
\label{est3k+1}
T(x) \sim 9 \prod_{p \geq 5} \frac{(1 - 3/p)}{\left(1 - 1/p \right)^3} \int_1^{x/3} \frac{dy}{ \log (3y) \log(4y) \log(6y) } \, \, .
\end{equation}

\bigskip

\begin{table}[tbp]
  \centering
   \begin{tabular}{ | c | c | p{4cm} |}
 \hline
  & Actual & Estimated\\
 x & number & number \\  \hline
1,000 & 9 & 11  \\
10,000 & 26 & 37 \\ 
100,000 & 142 & 165 \\ 
1,000,000 & 864 & 887 \\
10,000,000 & 5326 & 5359 \\
100,000,000 & 34863 & 34957 \\ \hline
  \end{tabular}
  \caption{Table of actual and estimated values of triples $(p, \frac{4p - 1}{3}, 2p - 1)$}
  \label{table3k+1}
\end{table}

\bigskip

Table \ref{table3k+1} shows a comparison of actual values and estimated values of $T(x)$. The estimates for $S(x)$ and $T(x)$ indicate that there are roughly $\frac{4}{3}$ more extreme symmetric triples of the form $(p, \frac{4p - 1}{3}, 2p - 1)$ than of the form $(p, \frac{3p - 1}{2}, 2p - 1)$.

\bigskip

Finally, we look at the extreme symmetric quadruple $(p, \frac{4p - 1}{3}, \frac{3p - 1}{2}, 2p - 1)$. This sequence forms a complete sub-graph of the graph, described in \cite{bpp2019}, in which nodes are labelled by primes and paths exist between two primes if they form a symmetric pair. The first two extreme symmetric sequences of this form are $(661, 881, 991, 1321)$ and $(6121, 8161, 9181, 12241)$. In order for both $\frac{4p - 1}{3}$ and $\frac{3p - 1}{2}$ to be integers we must have $p = 1 \pmod{6}$. Writing $p$ as $6k+1$ and letting $W(x)$ denote the number of extreme symmetric quadruples with smallest prime $\leq x$, we have
$$
W(x) = \# \{ k \leq \frac{x}{6}: 6k + 1, 8k + 1, 9k + 1, 12k + 1 \in \mathbb{P} \}.
$$
For the set of functions $\{ 6k + 1, 8k + 1, 9k + 1, 12k + 1  \}$ we have $\omega(2) = 1, \omega(3) = 1$ and $\omega(p) = 4$ for $p \geq 5$. We then get the following estimate for $W(x)$:
\begin{equation}
\label{est6k+1}
W(x) \sim 27 \prod_{p \geq 5} \frac{(1 - 4/p)}{\left(1 - 1/p \right)^4} \int_1^{x/6} \frac{dy}{ \log (6y) \log(8y) \log(9y) \log(12y) }  \, \, .
\end{equation}

\bigskip

The product $\prod_{p \geq 5} \frac{(1 - 4/p)}{\left(1 - 1/p \right)^4}$ is approximately $0.3074948895$. Table \ref{tablequad} shows a comparison of actual values and estimated values of $W(x)$.

\bigskip

\begin{table}[tbp]
  \centering
   \begin{tabular}{ | c | c | p{4cm} |}
 \hline
  & Actual & Estimated\\
 x & number & number \\  \hline
1,000 & 1 & 1  \\
10,000 & 2 & 3 \\ 
100,000 & 9 & 12 \\ 
1,000,000 & 43 & 51 \\
10,000,000 & 249 & 258 \\
100,000,000 & 1465 & 1452 \\ \hline
  \end{tabular}
  \caption{Table of actual and estimated values of quadruples $(p, \frac{4p - 1}{3}, \frac{3p - 1}{2}, 2p - 1)$}
  \label{tablequad}
\end{table}

\bigskip

Sets of the form $\{ 2k + 1, 3k + 1, 4k + 1 \}$ and $\{ 3k + 1, 4k + 1, 6k + 1 \}$ can be sieved using Brun's method (see, for example, \cite{Nathanson_1996}) to give an upper bound:
\begin{equation}
\label{brun3}
S(x), T(x) =  \mathcal{O} \left( x \left( \frac{\log \log x}{\log x} \right)^3 \, \right).
\end{equation}

\bigskip

Similarly, the set $\{ 6k + 1, 8k + 1, 9k + 1, 12k + 1 \}$ can be sieved to give an upper bound:
\begin{equation}
\label{brun4}
W(x) =  \mathcal{O} \left( x \left( \frac{\log \log x}{\log x} \right)^4 \, \right).
\end{equation}

\bigskip

The estimates (\ref{brun3}) and  (\ref{brun4}) are consistent with the upper bounds for $S$, $T$ and $W$ in (\ref{est2k+1}), (\ref{est3k+1}) and (\ref{est6k+1}). 

\bigskip

\bibliographystyle{plain}
\begin{small}
\bibliography{SymmetricPrimes}
\end{small}

\end{document}